\documentclass[12pt]{amsart}
\usepackage{latexsym,amscd,amssymb,epsfig}
\usepackage[dvips]{color}







\theoremstyle{plain}
\newtheorem{thm}{Theorem}
\newtheorem{lemma}[thm]{Lemma}
\newtheorem{prop}[thm]{Proposition}

\newtheorem*{thm*}{Theorem}
\newtheorem*{lemma*}{Lemma}
\newtheorem*{prop*}{Proposition}
\newtheorem*{cor*}{Corollary}
\newtheorem*{conj*}{Conjecture}

\theoremstyle{definition}

\theoremstyle{remark}
\newtheorem{rmk}{Remark}
\newtheorem{obs}{Observation}





\newcommand{\ind}{\mbox{$\perp \kern-5.5pt \perp$}}

\begin{document}

\title[Shellability characterization of geometric lattices]{A lexicographic shellability 
characterization of geometric lattices}
\thanks{The first author was supported by NSF grant DMS-0954865.  
The second author was supported by NSF grant DMS-1002636, DMS-1200730 and
the Ruth I. Michler Prize of the Association for Women in Mathematics.
}
\keywords{Geometric lattice \and M\"obius function \and Shellability \and Order complex}
 \subjclass{05B35 , 52B22 , 06A07 , 05E45}


\author{Ruth Davidson}
\address{North Carolina State University}
\email{redavids@ncsu.edu}

\author{Patricia Hersh}
\address{North Carolina State University}
\email{plhersh@ncsu.edu}

\maketitle

\begin{abstract}
Geometric lattices are characterized in this paper as those finite, atomic lattices  such that every
atom ordering induces a lexicographic shelling given by an edge 
labeling known as a minimal labeling.  Equivalently, geometric lattices are shown to 
be exactly those 
finite lattices such that every ordering on the join-irreducibles induces a lexicographic
shelling.
This  new characterization fits into a similar paradigm as  McNamara's  characterization of supersolvable lattices as those lattices admitting a different type of lexicographic shelling, 
namely one in which each
maximal chain is labeled with a permutation of $\{ 1,\dots ,n \} $.   
\end{abstract}

\section{Introduction}
\label{intro}
 In \cite{McNamara}, 
 McNamara  proved that supersolvable lattices can be characterized as lattices admitting a certain type of EL-labeling known as an $S_n$-EL-labeling.   Each maximal chain is labeled by the set of labels $\{ 1,\dots ,n\} $ with each label used exactly once in each maximal chain.  Previously, Stanley had proven that all  supersolvable lattices admit such EL-labelings in \cite{Stanley72}.
 Thus, McNamara's result gave a new characterization of supersolvable lattices: that a 
 finite lattice is supersolvable if and only if it has an $S_n$-EL-labeling.
 
This paper gives a result of a similar spirit for
 geometric lattices -- 
a new characterization of geometric lattices as the lattices admitting a family of 
lexicographic shellings induced by the various possible orderings on the join-irreducibles.  
The join-irreducibles turn out to be exactly the atoms in this case.
Geometric lattices are well-known to have the property that every atom ordering induces an EL-labeling
by labeling each cover relation $u\lessdot v$ with the smallest atom that is below $v$ but not
$u$.  
We will prove the converse: that all finite atomic lattices in which every atom ordering induces an EL-labeling are geometric lattices.  We also prove the following 
reformulation which was suggested to us 
by Peter McNamara as a way of removing the hypothesis that our lattices are atomic: 
that a finite lattice $L$ is geometric if and only if
every ordering of the join-irreducibles induces a minimal labeling that is 
an EL-labeling. 

There is an extensive literature on the notion of lexicographic shellability. 
Most of the emphasis is on proving that important 
classes of partially ordered sets admit 
lexicographic shellings.   
For example, upper semimodular lattices (
\cite{Ga}), geometric lattices
(
\cite{Stanley74},  
\cite{bjorner80}), geometric  semilattices (
\cite{WW}),  supersolvable lattices (
\cite{Stanley72}), subgroup lattices of 
solvable groups (
\cite{Sh}, 
\cite{Wo}), and 
Bruhat order (
\cite{bjorner82}) are all known to be 
lexicographically shellable.
Our point is to take things in the opposite direction, namely to use the types of
lexicographic shellings (induced by EL-labelings) that
geometric lattices are known to have as a way of characterizing geometric lattices. 

A motivation for this work is that it gives a new working definition for geometric lattices.   Since geometric lattices are exactly the lattices of flats of matroids and contain enough data to 
determine the matroids, this also gives a new characterization for matroids.   The condition that each ordering on the atoms of a finite atomic lattice  induces an EL-labeling seems like something one might  sometimes be able to prove for posets of interest  without  knowing a priori that one has a geometric lattice.  On the other hand, this is a strong enough condition that we had wondered whether this was a strictly larger class of posets than the geometric lattices, thinking that an affirmative answer could lead to the discovery of some very interesting new posets.  
What led us to this project 
was a result of \cite{nyman} where it was natural to ask if the result could be extended to those 
finite atomic lattices in which every atom ordering admits an EL-labeling;  we quickly  realized 
that  we 
did not know whether this latter class was actually larger than the class of geometric lattices.
Theorem ~\ref{atom-theorem} gives a negative answer to this question.

Our method of proof also establishes a direct link between two seemingly quite distinct types of poset theoretic structure.  That is, we give a construction which takes as its input any pair of lattice elements $x,y$ which together prevent the  lattice from being semimodular and which produces from this  a nonempty family of orderings on the join-irreducibles whose associated edge labelings do not give  lexicographic shellings.    In other words, we take the local structure of having $x,y$ which both cover $x\wedge y$ but are not both covered by $x\vee y$, and we translate that into a combinatorial-topological structure precluding certain orderings on the maximal chains from yielding shellings of the simplicial complex typically associated to a poset, the order complex of the poset.

There is reason to be especially interested in understanding the lexicographic shellings for
geometric lattices,  as we now describe. 
The class of  
geometric lattices  
includes all of the intersection lattices of real, central  hyperplane arrangements.
Zaslavsky expressed the number of regions in the complement of a real hyperplane arrangement in terms of M\"{o}bius functions of geometric lattices and geometric semi-lattices in \cite{Zaslavsky}.  
Lexicographic shellability is one of the most powerful available methods for determining 
M\"obius functions, and the known lexicographic shellings for geometric lattices indeed yield 
M\"obius function formulas in terms of the 
so-called nbc-bases of the matroid with respect to a choice of ordering on the ground set of the 
matroid.
Thus, lexicographic shellability  
in the case of geometric lattices gives a method for region counting in the complement of a real hyperplane arrangement; the existence of many different lexicographic  shellings, namely one for each atom ordering,  also has the ramification of giving numerous distinct but equal expressions for the number of regions in a real, central hyperplane arrangement.  Thus, it seemed interesting to know as much as possible about the lexicographic shellings of this  class of posets and potentially also for closely related classes of posets.

McNamara's characterization of 
supersolvable lattices has given a useful new way of proving classes
of  lattices to be supersolvable.
See e.g. \cite{BC} for one such result.  
Our  results will imply there is a similar potential 
for geometric lattices.  

\section{Background and terminology}

Let $P$ be a finite poset.  Let $E(P)$ denote the set of
edges of the Hasse diagram of $P$.  We write $x \lessdot y$
to indicate that $y$ covers $x$ in $P$, namely $x\le z\le y$ implies $z=x$ or $z=y$.   If $\lambda : E(P) \to \mathbb{N}$ is an edge labeling of the Hasse diagram of $P$ and $x \lessdot y$, then we write $\lambda(x,y)$ to indicate the label given to the edge from $x$ to $y$.  Recall that $\lambda$ is an \textit{EL-labeling} for $P$ if for every interval $[x, y] $ of $ P$, there is a unique \textit{rising chain} $C:  = x  = x_{1} \lessdot x_{2} \lessdot \cdots \lessdot x_{j} = y$ where $\lambda(x,x_{2}) \leq \lambda(x_{2}, x_{3}) \leq \cdots \leq \lambda(x_{j-1}, y)$, and the label sequence of $C$ is lexicographically smaller than the label sequence of every other saturated
chain in the interval $[x,y]$ (cf.  \cite{bjorner80}).  
It is well-known that an EL-labeling gives a shelling order for the facets (maximal faces)
of the order complex $\Delta (P)$ of $P$.  

We now review the notion of geometric lattice as well as the types of  EL-labelings which 
they are already known to possess.  An 
{\it atom } in a poset $P$ with unique  minimal element $\hat{0} $ is any $a\in P$ which
covers
$\hat{0} $.  A finite poset $P$ is {\it graded } 
if every maximal chain has the same number of elements
in it; in this case, there is a rank function $\rho $ defined recursively
by $\rho (x) = 0$ for $x$  a minimal 
element of $P$ and $\rho (y) = \rho (x) + 1$ for $x\lessdot y$.
A {\it lattice } is a poset such that any pair of elements $x,y$ has a unique
least upper bound $x\vee y$ and a unique greatest lower bound $x\wedge y$.
A lattice is \textit{atomic}  if 
every element is a join of atoms.  A lattice is \textit{semimodular} if it has
a rank function $\rho$ that satisfies
$$ (i) \ \rho(x \wedge y) + \rho(x \vee y) \leq \rho(x) + \rho(y). $$
A finite lattice is \textit{ geometric} if it is atomic and semimodular.  

Our interest is in using the existence of a certain family of 
edge labelings for a poset $P$ 
to show that $P$ fits into the class of  
geometric lattices.   Therefore, let us now introduce the labelings we
will use.
%

An element $x$ in a lattice $L$ is  a {\it join-irreducible} if $x= y\vee z$ implies $x=y$ or $x=z$.
Let $\mbox{JoinIrred(L)}$ denote the set of join-irreducibles of $L$.  For $x \in L$, let 
$$
\mbox{JoinIrred}(x) = \{w \leq x | w \in \mbox{JoinIrred}( L)  \}.  $$
Let $\mathcal{A}(L)$ denote the atoms of $L$, and for $x \in L$ let $A(x) = \{ a \leq x | a \in \mathcal{A}(L) \}$.  Note that in an atomic lattice $L$ we have $\mbox{JoinIrred}(L) = \mathcal{A}(L)$. 

Let $n = |\mbox{JoinIrred}(L)|$. 
Given any bijection $\gamma : \mbox{JoinIrred}(L) \to [n]$, 
the map $\gamma$ induces a \textit{minimal labeling} $\lambda_{\gamma} :  E(L) \to [n]$ 
by the rule 
$$\lambda_{\gamma}(x, y) = \min \{\gamma (j) |  \ j \in \mbox{JoinIrred}(y) \setminus 
\mbox{JoinIrred}(x) \} .$$ 
\par

\begin{thm}[Bj\"orner \cite{bjorner80}]
\label{bjorner-theorem}
The minimal labeling resulting from any linear  ordering of the atoms in a geometric lattice is
an EL-labeling.
\end{thm}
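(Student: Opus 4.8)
The plan is to produce, for each interval $[x,y]$ of the geometric lattice $L$, an explicit \emph{greedy} maximal chain obtained by repeatedly adjoining the atom of smallest $\gamma$-value that is not yet below the current element, and then to show that this single chain is simultaneously the unique rising chain of $[x,y]$ and its lexicographically smallest chain. First I would isolate the one structural consequence of semimodularity that drives everything: if $a$ is an atom with $a \not\le x$, then $x \vee a$ covers $x$. Indeed, since $a$ is an atom and $a\not\le x$ we have $x\wedge a = \hat{0}$, so inequality $(i)$ gives $\rho(x\vee a)\le \rho(x)+\rho(a)=\rho(x)+1$; as $x < x\vee a$ strictly, the rank function forces $\rho(x\vee a)=\rho(x)+1$, and a rank jump of $1$ means a covering. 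Using this, construct $x=x_0 \lessdot x_1 \lessdot \cdots \lessdot x_r = y$ by letting $a_i$ be the atom minimizing $\gamma$ over the nonempty set $A(y)\setminus A(x_{i-1})$ and setting $x_i = x_{i-1}\vee a_i$. The lemma guarantees $x_{i-1}\lessdot x_i$, and since $A(x_i)\setminus A(x_{i-1})\subseteq A(y)\setminus A(x_{i-1})$ contains $a_i$ while $a_i$ minimizes $\gamma$ over the larger set, we get $\lambda_\gamma(x_{i-1},x_i)=\gamma(a_i)$. The process terminates at $y$ because the sets $A(x_i)$ strictly increase inside $A(y)$ and $L$ is atomic, so $A(x_r)=A(y)$ yields $x_r=y$.

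Next I would record two facts about labels. Along \emph{any} maximal chain $x=z_0\lessdot\cdots\lessdot z_r=y$, the new-atom sets $A(z_i)\setminus A(z_{i-1})$ are pairwise disjoint, so the atoms realizing the successive labels are distinct; as $\gamma$ is a bijection, the labels along a maximal chain are \emph{all distinct}. For the greedy chain specifically, $a_i$ is the unique $\gamma$-minimizer of $A(y)\setminus A(x_{i-1})$, and since $a_i\le x_i$ it is dropped at the next stage, so $\gamma(a_1)<\gamma(a_2)<\cdots<\gamma(a_r)$; thus the greedy chain is rising.

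Finally I would show the greedy chain is the only rising chain and is lex-smallest, both via the same forced-first-step argument followed by induction on $\rho(y)-\rho(x)$. For any maximal chain $C\colon x=z_0\lessdot\cdots\lessdot z_r=y$ the first label is $\ell_1=\min\gamma(A(z_1)\setminus A(x))\ge \gamma(a_1)$, because $A(z_1)\setminus A(x)\subseteq A(y)\setminus A(x)$; this gives lex-minimality of the greedy chain at the first coordinate, and equality $\ell_1=\gamma(a_1)$ forces $a_1\le z_1$, hence $z_1\ge x\vee a_1 = x_1$ and so $z_1=x_1$ since both cover $x$. If instead $C$ is rising, let $k$ be least with $a_1\le z_k$; then $\gamma(a_1)\ge\ell_k$ while $\ell_1\le\ell_k$ and $\gamma(a_1)\le\ell_1$, forcing $\ell_1=\ell_k$, whence $k=1$ by distinctness of labels along $C$, and again $z_1=x_1$. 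In both cases the chain restricts to $[x_1,y]$, where the greedy chain of $[x,y]$ restricts to the greedy chain, so the inductive hypothesis completes the identification. I expect the only genuine subtlety to be the structural covering lemma together with the bookkeeping that each greedy label is exactly $\gamma(a_i)$ and that these strictly increase; once that is in place, the uniqueness and lexicographic comparisons are a routine induction.
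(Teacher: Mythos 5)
Your proof is correct, but there is nothing in the paper to compare it against: the paper states this result as a known theorem of Bj\"orner and gives no proof of it, reserving all of its own arguments for the converse direction (Theorem~\ref{main-theorem}). Your argument is essentially the classical one, and it holds up under scrutiny. The crux is your structural lemma that semimodularity forces $x \vee a$ to cover $x$ whenever $a$ is an atom with $a \not\le x$; this is exactly the ingredient that is unavailable in the paper's converse setting, and everything else in your proof uses only atomicity. Indeed, two of your auxiliary facts coincide with lemmas the paper proves for its own (converse) purposes: distinctness of labels along a saturated chain is precisely Lemma~\ref{distinct-lemma}, proved the same way via disjointness of the sets $A(z_i)\setminus A(z_{i-1})$, and the fact that $A(y)\setminus A(x_{i-1})$ is nonempty whenever $x_{i-1} < y$ (which both justifies each greedy step and gives termination at $y$) is the contrapositive form of Lemma~\ref{atom-contain}. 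Your bookkeeping is sound: the greedy label at step $i$ really is $\gamma(a_i)$ because $a_i$ minimizes $\gamma$ over the superset $A(y)\setminus A(x_{i-1})$ of $A(x_i)\setminus A(x_{i-1})$; the greedy labels strictly increase because $a_i$ is the unique minimizer and is discarded at the next stage; and the forced-first-step argument (equality in the first label forces $a_1 \le z_1$, hence $z_1 = x_1$ since both cover $x$) correctly drives the induction for both lexicographic minimality and uniqueness of the rising chain. The only point worth stating explicitly rather than implicitly is that nonemptiness of $A(y)\setminus A(x_{i-1})$ follows from atomicity (were it empty, Lemma~\ref{atom-contain} would give $y \le x_{i-1}$), but you clearly have this fact in hand since you invoke the same reasoning for termination.
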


The following proposition, which appears as Corollary 1, p. 81, in \cite{Birkhoff},  gives an
alternate formulation of semimodularity for graded lattices that will be convenient in our proofs.

\begin{prop}[Birkhoff] \label{equiv-prop}
Let $L$ be a finite lattice.  The following two conditions are equivalent: 
 \begin{itemize}
\item $L$ is graded, and the rank function $\rho$ of $L$ satisfies  the semimodularity 
condition (i) above.
\item If $x$ and $y$ both cover $x \wedge y$, then $x \vee y$ covers both $x$ and $y$.  
\end{itemize} 
\end{prop}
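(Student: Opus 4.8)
The plan is to prove the two implications separately, treating the forward direction as a short rank computation and reserving the real effort for the reverse direction, where the submodular inequality must be extracted from the purely order-theoretic diamond property.

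First I would show that the first condition implies the second. Suppose $L$ is graded with submodular rank function $\rho$, and that $x$ and $y$ both cover $x \wedge y$. Then $x \neq y$ (otherwise $x$ would cover itself), so $x < x \vee y$ and $y < x \vee y$, and $\rho(x) = \rho(y) = \rho(x \wedge y) + 1$. Submodularity gives $\rho(x \vee y) \le \rho(x) + \rho(y) - \rho(x \wedge y) = \rho(x \wedge y) + 2$, while $x < x \vee y$ forces $\rho(x \vee y) \ge \rho(x) + 1 = \rho(x \wedge y) + 2$. Hence $\rho(x \vee y) = \rho(x) + 1 = \rho(y) + 1$, and since $L$ is graded a strict inequality of ranks differing by one is a covering, so $x \vee y$ covers both $x$ and $y$.

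For the reverse direction I would first show the diamond property forces $L$ to satisfy the Jordan--Dedekind chain condition, so that it is graded with a well-defined rank function $\rho$. The argument is an induction on the length of a maximal chain in an interval $[a,b]$, comparing the initial steps $a \lessdot c_1$ and $a \lessdot d_1$ of two maximal chains. If $c_1 = d_1$ the claim follows from the inductive hypothesis applied to $[c_1,b]$; if $c_1 \neq d_1$ then $c_1 \wedge d_1 = a$, so the diamond property yields $e = c_1 \vee d_1$ covering both $c_1$ and $d_1$, and splicing in a maximal chain from $e$ to $b$ lets the inductive hypothesis on $[c_1,b]$ and on $[d_1,b]$ equate the two original chain lengths.

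The main obstacle is upgrading the local diamond property to submodularity for arbitrary pairs. The key lemma I would establish is an upper covering property: \emph{if $a \lessdot b$ and $c \ge a$, then $b \vee c$ either equals $c$ or covers $c$.} I would prove this by induction on $\rho(c) - \rho(a)$; after reducing to the case $b \wedge c = a$, I pick a lower cover $c_1$ of $c$ with $c_1 \ge a$, apply the inductive hypothesis to see that $b \vee c_1$ covers $c_1$, and then apply the diamond property to the two distinct covers $b \vee c_1$ and $c$ of $c_1$ to conclude that $b \vee c = (b \vee c_1) \vee c$ covers $c$. With this lemma available, I would deduce the submodular inequality by induction on $\rho(x) - \rho(x \wedge y)$: writing $m = x \wedge y$ and choosing a lower cover $x_1$ of $x$ with $x_1 \ge m$, one checks $x_1 \wedge y = m$, applies the inductive hypothesis to $x_1$ and $y$, and invokes the upper covering lemma (with the covering $x_1 \lessdot x$ and element $x_1 \vee y$) to bound $\rho(x \vee y) \le \rho(x_1 \vee y) + 1$; combining these gives $\rho(x \vee y) + \rho(x \wedge y) \le \rho(x) + \rho(y)$, which is exactly condition $(i)$.
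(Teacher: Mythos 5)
Your proof is correct, but there is nothing in the paper to compare it against: the paper does not prove this proposition at all. It is quoted, with attribution, as Proposition 3.3.2 of Stanley's \emph{Enumerative Combinatorics, Volume I} and used as a black box in the proof of the main theorem. What you have written is essentially the standard textbook argument for that cited result, and it is sound: the forward direction is the short rank computation you give (using that in a graded lattice a strict inequality with rank difference one is a covering); for the converse, you first obtain gradedness from the diamond property via the classical Jordan--Dedekind splicing argument through $e = c_1 \vee d_1$, then prove the one-step upper covering lemma ($a \lessdot b$ and $c \geq a$ imply $b \vee c$ equals or covers $c$) by induction on $\rho(c) - \rho(a)$, and finally deduce submodularity by induction on $\rho(x) - \rho(x \wedge y)$. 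The few points you gloss are routine and do close up: in the covering lemma, $b \vee c_1 = c_1$ and $b \vee c_1 = c$ are both excluded because each would force $b \leq c$, contradicting the reduction $b \wedge c = a \neq b$; and two \emph{distinct} covers of $c_1$ necessarily have meet $c_1$, so the diamond property genuinely applies to the pair $b \vee c_1$ and $c$. So your proposal correctly supplies a proof that the paper deliberately outsources; its three-stage decomposition (gradedness, local covering property, inductive submodularity) is the same route taken in the lattice-theory literature the paper is citing.
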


See e.g. \cite{EC} for further background on posets.




\section{Lexicographic shellability characterizations for geometric lattices}

This section is devoted to 
giving two new characterizations of geometric lattices.  The first is based on atom orderings for 
finite atomic lattices.  
The second characterization  replaces this with  a condition on the  
orderings of the  join-irreducibles for finite lattices  so as to 
avoid assuming a priori that 
the lattices are atomic.
In both cases, we prove for any finite lattice $L$ that if  
every ordering of the join-irreducibles 
induces a  minimal labeling which is an EL-labeling, then $L$ is a  geometric lattice. 
To this end, we
first develop some helpful properties of minimal labelings.  

\begin{lemma}\label{distinct-lemma}
Let $L$ be a finite atomic lattice and let $\lambda_{\gamma}$ be a minimal labeling  on $L$.
 Then $x_i \lessdot x_{i+1}\le x_j \lessdot x_{j+1}$ in $L$ implies 
 $\lambda _{\gamma}(x_{i}, x_{i + 1}) \neq \lambda_{\gamma}(x_{j}, x_{j+1})$.  
  In other words, the labels 
on any particular saturated chain are distinct.
\end{lemma}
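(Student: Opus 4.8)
The plan is to exploit the fact that, along any chain in $L$, the sets of atoms lying below the successive elements are nested. Writing $S_i := \gamma(A(x_i))$ for brevity, the first step is to observe that since $x_1 < x_2 < \cdots < x_k$, every atom below $x_i$ is also below $x_{i+1}$ by transitivity, so that $A(x_1) \subseteq A(x_2) \subseteq \cdots \subseteq A(x_k)$ and hence $S_1 \subseteq S_2 \subseteq \cdots \subseteq S_k$. With this notation the definition of the minimal labeling reads $\lambda_{\gamma}(x_i, x_{i+1}) = \min(S_{i+1} \setminus S_i)$, so the entire statement reduces to a purely set-theoretic assertion about the minima of the consecutive differences of a nested chain of finite subsets of $[n]$.

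The core argument I would then give is by contradiction. Suppose two of the labels agree, say $\lambda_{\gamma}(x_i, x_{i+1}) = \lambda_{\gamma}(x_j, x_{j+1}) = m$ with $i < j$. On one hand, $m = \min(S_{i+1} \setminus S_i)$ lies in $S_{i+1}$; since $i + 1 \le j$ and the sets are nested, $S_{i+1} \subseteq S_j$, so $m \in S_j$. On the other hand, $m = \min(S_{j+1} \setminus S_j)$ forces $m \in S_{j+1} \setminus S_j$, so in particular $m \notin S_j$. These two conclusions contradict each other, which shows that no label can be repeated. Intuitively, once a label $m$ enters the atom set at some covering step it remains present thereafter, so it can never again be the smallest newly acquired label.

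The argument is short, and essentially all of the content is the nesting observation in the first step; there is no genuine obstacle to overcome. The one point worth stating carefully is that the chain is saturated precisely so that each $\lambda_{\gamma}(x_i, x_{i+1})$ is defined on an edge of the Hasse diagram, but saturation plays no further role: the same conclusion would hold for the labels of consecutive covers in any chain, since the argument depends only on the comparabilities $x_1 < x_2 < \cdots < x_k$ and not on the covering structure itself.
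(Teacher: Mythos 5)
Your proof is correct and rests on exactly the same observation as the paper's: the atom sets $A(x_1) \subseteq \cdots \subseteq A(x_k)$ are nested along the chain, so the consecutive differences $A(x_{i+1})\setminus A(x_i)$ are pairwise disjoint, and each label lies in its own difference set. The paper states this in one line; your contradiction argument is simply that same disjointness spelled out for the particular repeated label $m$.
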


\begin{proof}
This is immediate from the fact  that $A(x_{j+1} ) \setminus A(x_j)$ is disjoint
from $A(x_{i+1})\setminus A(x_i)$ for $i\ne j$.
\end{proof}

\begin{lemma}\label{atom-contain}
Let $L$ be a finite 
lattice.  Then $\mbox{JoinIrred}(u)\subseteq \mbox{JoinIrred}(v)$ 
if and only if
$u \leq v$.  Moreover,  $u=v$ if and only if  $\mbox{JoinIrred}(u)=\mbox{JoinIrred}(v)$. In the special case of 
a finite atomic lattice
$L$ we have $A(u)\subseteq A(v)$ if and only if $u\le v$, and we have $A(u)=A(v)$ if and only
if $u=v$.
\end{lemma}

\begin{proof}
This  follows from two facts: (1) that every element of a finite lattice $L$ is a join of join-irreducibles,  
and (2) that the only join-irreducibles in an atomic lattice are the atoms.
\end{proof}


\begin{lemma}\label{cover-x}
Let $L$ be a finite lattice and suppose that there exist  $x, y \in L$ that both cover $x \wedge y$, but with $x$ not  covered by $x \vee y$.  
 Then for $j$ any join-irreducible satisfying $y = (x \wedge y) \vee j$, we have that  
 $j \notin \mbox{JoinIrred}(z)$ for any $z$ covering $x$. 
\end{lemma}

\begin{proof}
Assume by way of contradiction that the join-irreducible $j$ given above satisfies 
$j \leq z$ for some  $x \lessdot z$.
Note that $x\wedge y \le x \lessdot z$, which together with $j\le z $ implies 
$( x \wedge y) \vee j \leq z$.
But $ (x\wedge y) \vee j = y$, so we may conclude  that $x \vee y \leq z$.  This contradicts 
the fact  that  $x\vee y$ does not cover $x$, completing our proof.
\end{proof}





Now to our first characterization of geometric lattices.

\begin{thm}\label{atom-theorem}
Let $L$ be a finite atomic lattice.  Then $L$ is geometric if and only if 
every atom ordering induces a minimal labeling that is an EL-labeling. 
\end{thm}

\begin{proof}


Bj\"orner proved in Theorem ~\ref{bjorner-theorem} that all of the minimal labelings for a geometric 
lattice are EL-labelings.  We now prove the converse.
Since we assume that  $L$ is atomic, what remains is  to prove that $L$ is semimodular.

Suppose otherwise. 
By Proposition \ref{equiv-prop},  
there must exist $x, y  \in L$ such that $x$ and $y$ both cover $x \wedge y$ but $x \vee y$ does not cover  $x$.
By Lemma ~\ref{atom-contain}, we may choose some atom $a_{x} \in A(x) \setminus A(x \wedge y)$ such that $a_x\not\in A(y)$.  Since $L$ is an atomic lattice, there must  also exist $a_{y} \in A(y)$ such that $(x \wedge y) \vee a_{y} = y$.  By Lemma ~\ref{cover-x},
$a_{y} \notin A(z)$ for any $z$ such that $x \lessdot z$.  This implies
$a_{y} \neq a_{x}$, since $a_y \not\in A(z)$ for all $z$ satisfying
$x\lessdot z$, which in particular  implies $a_y\not\in A(x)$.

Now consider any atom ordering  $\gamma : \mathcal{A}(L) \to [n]$ 
such that $\gamma(a_{x}) = 1$ and $\gamma(a_{y}) = 2$.  
%
%
Since $a_{x} \in A(x) \setminus A(x \wedge y)$ and $\gamma(a_{x}) = 1$, we know that 
$\lambda_{\gamma}(x \wedge y, x) = 1$. 
Let 
$$
C : = \quad x\wedge y = x_0 \lessdot x = x_{1} \lessdot x_{2} \lessdot \cdots \lessdot x_{k} = x \vee y
$$
be the lexicographically smallest saturated chain on the interval $[x\wedge y, x\vee y]$.
By Lemma ~\ref{cover-x}, 
$a_{y} \notin A(x_{2})$. 
Therefore, 
$\lambda_{\gamma}(x_1, x_2) \neq 2$.  By Lemma ~\ref{distinct-lemma}, 
there is no repetition in the label sequence, implying $\lambda_{\gamma}(x_1, x_2) > 2$. 
For some $2 < j \leq k$, we must have $a_{y} \in A(x_{j}) \setminus A(x_{j-1})$, implying 
$\lambda_{\gamma }(x_{j-1},x_j) = 2$.

But $\min \{\gamma(a) | a \in A(x_{2}) \setminus A(x_{1} ) \} \geq 3$, so $\lambda_{\gamma}(x_1, x_{2}) > \lambda_{\gamma}(x_{j-1}, x_{j})$. 
This implies that  $C$ 
cannot have weakly increasing labels, hence that 
 $\lambda_{\gamma}$ is not an EL-labeling.  
 \end{proof}
 

Next we give a closely related alternative 
characterization of geometric lattices which avoids making
the assumption that 
the lattices are atomic.  The essence of the proof
will be a reduction to the atomic case.


\begin{thm}\label{JI-theorem}
A finite lattice $L$ is a geometric lattice if 
and only if  every ordering of the join-irreducibles 
induces a minimal labeling $\lambda_{\gamma}$ which is an EL-labeling.
\end{thm}

\begin{proof}
One direction is well-known, so we focus on the other direction.  That is, we will assume there is 
some join-irreducible that is not an atom, and use this to produce an ordering on join-irreducibles whose associated minimal labeling is not an EL-labeling.  The case where all join-irreducibles are atoms has already been handled in Theorem ~\ref{atom-theorem}.
 
Suppose there exists $v \in \mbox{JoinIrred}(L)$  that is not an atom.  In this case, we may  choose such a $v$ so that if $a  <  v$ for  $a \in \mbox{JoinIrred}(L)$ then $a$ is an atom.  It is well known (see \cite{EC},  p. 286) that in a finite lattice, the join-irreducibles are precisely the elements that cover exactly one other element.  Let $u$  be the unique element in $L$ with $u \lessdot v$.  Thus,  $\mbox{JoinIrred}(u)$ is entirely composed of atoms and $|\mbox{JoinIrred}(u)| = k$ for some $1 \leq k \leq n-1$ where $n$ is the number of join-irreducibles in $L$.  Consider an
ordering $\gamma$ on the join-irreducibles such 
that $\gamma(v) = 1$ and  $ \{ \gamma(x) | x\in \mbox{JoinIrred}(u) \} 
= \{2, 3, \dots , k + 1\}$.    
The lexicographically smallest label sequence for any  saturated 
chain in the interval $[\hat{0}, v]$ must then have a descent,  because all saturated chains must include $u$, but  $\lambda_{\gamma}(u,v) = 1$ while  $\lambda_{\gamma}(x,y) > 1$ for all  covering relations $x \lessdot y$ in the interval $[\hat{0}, u]$.  Thus,  this minimal labeling $\lambda_{\gamma }$ 
is not an EL-labeling.
\end{proof}

\section{Lexicographic shellability characterization of semimodular lattices}

We recently learned that our next theorem  previously appeared in \cite{Ri}.  We nonetheless include our proof of Theorem ~\ref{semimodular} 
both for its new approach to this sort of question and because it 
provides a significantly higher level of detail than appears in the argument 
in  \cite{Ri}.  First we make an observation that the proof will rely upon.

\begin{obs}\label{extend}
Let $L$ be a finite lattice with $|\mbox{JoinIrred}(L)| = n$ and let $x \in L$.  If $|\mbox{JoinIrred}(x)| = k$ and $\hat{\gamma} : \mbox{JoinIrred}([\hat{0}, x]) \to [k]$ is a linear extension of the subposet of join-irreducibles of the interval $[\hat{0}, x]$, then there exists a linear extension $\gamma: \mbox{JoinIrred}(L) \to [n]$ of the subposet $\mbox{JoinIrred}(L)$ that restricts to the map $\hat{\gamma}$.  
\end{obs}

\begin{thm}\label{semimodular}

Let $L$ be a finite lattice with  $|\mbox{JoinIrred}(L)| = n$.  Suppose that
for every linear extension $\gamma : \mbox{JoinIrred}(L) \to [n]$ of the subposet  $\mbox{JoinIrred}(L)$ of join-irreducibles in $L$, the resulting 
minimal labeling $\lambda_{\gamma}$ is an EL-labeling on $L$.  Then $L$ is (upper) semimodular.  

\end{thm}

\begin{proof}

Assume by way of contradiction that $x$ and $y$ cover $x \wedge y$ but that $x \vee y$ does not cover $x$.  
Lemma \ref{atom-contain} shows that 
there exist  join-irreducibles $j_{x} \in \mbox{JoinIrred}(x) \setminus \mbox{JoinIrred}(x \wedge y)$ and $j_{y} \in \mbox{JoinIrred}(y) \setminus \mbox{JoinIrred}(x \wedge y)$.  
Let $k$ be 
the number of elements in 
$\mbox{JoinIrred}(x \wedge y)$.  
Notice that if  $k = 0$, then $x$ and $y$ are atoms, so in particular $x$ and $y$ are
join-irreducibles $j_x := x $ and $j_y := y$.  

Now we choose a linear extension $\gamma$ of the subposet $\mbox{JoinIrred}(L)$ of $L$ comprised of the
join-irreducibles.  By Observation \ref{extend},  we may  choose $\gamma $ so  that it assigns
exactly the values in $\{ 1,\dots ,k \} $  to 
the 
 join-irreducibles in $[\hat{0}, x \wedge y]$.  
Moreover, we may insist that  $\gamma(j_{x}) = k + 1$ and $\gamma(j_{y}) = k + 2$,   
choosing how $\gamma $ assigns  the values in $\{ k+3,\dots ,n\} $
to the subposet comprised of the 
remaining  join-irreducibles by taking any linear extension of the remaining join-irreducibles.  

Denote  the lexicographically smallest  maximal 
chain in the interval $[x \wedge y, x \vee y]$ by 
$$C = x \wedge y  \lessdot x \lessdot x_{2}\lessdot \cdots x_{m-1} \lessdot x_{m} = x \vee y .$$
We have assumed that $x \vee y$ does not cover $x$, implying $m>2$.   Our constraints given
above on 
our choice of $\gamma $  imply that $\lambda_{\gamma }(x_1,x_2) \not\in \{ 1,\dots ,k+2 \} $, since 
Lemma   \ref{cover-x} ensures that $j_y\not\in \mbox{JoinIrred}(x_2)$.   Thus,
$\lambda_{\gamma}(x, x_{2})  \geq k + 3$. 
But since $\mbox{JoinIrred}(y) \subset \mbox{JoinIrred}(x \vee y)$, we then must have  $j_{y} \in \mbox{JoinIrred}(x_{\ell}) \setminus \mbox{JoinIrred}(x_{\ell-1})$ for some $2 < \ell \leq m$. This implies   $\min(\{ \gamma(j) | j \in \mbox{JoinIrred}(x_{\ell}) \setminus \mbox{JoinIrred}(x_{\ell -1})\}  )  \leq k + 2$.  Hence,  $\lambda_{\gamma}(x_{\ell -1}, x_{\ell}) \leq k + 2 < r =  \lambda_{\gamma}(x,x_{2}) $, forcing the chain $C$ to  have a descent, a contradiction to this being an
 EL-labeling.
 Thus, knowing that $x$ and $y$ both cover $x\wedge y$  does  imply in our setting that
  $x\vee y$ covers both 
 $x$ and $y$.  Thus,   $L$ is semimodular.
\end{proof}

\medskip

\section{Concluding remarks}


\begin{rmk}
Axel Hultman has  informed us (personal communication) that the proof of
Theorem  ~\ref{atom-theorem}  
may also easily be modified to yield the 
following statement: Let $L$ be a finite lattice with set of join-irreducibles $\mbox{JoinIrred}(L)$ and
$k = |\mbox{JoinIrred}(L)|$.  Then 
the  labeling $\lambda_{\gamma }$ induced by 
each choice of order-preserving bijection $\gamma : \mbox{JoinIrred}(L) \rightarrow [k]$
is an  R-labeling if and
only if $L$ is  semimodular.
\end{rmk}

 If $M = M(S)$ is a matroid of rank $r$ on a finite set $S$, recall that the  \textit{independence complex} of $M$ is the $(r-1)$-dimensional simplicial complex formed by the family of all independent sets in $M$.   On the other hand, a geometric lattice is the lattice of \textit{flats}, or closed sets, of a matroid.  The matroid structure of a geometric lattice $L$ is the matroid with ground set $\mathcal{A}(L)$ and the closure operator $cl(W)$ on a subset $W \subseteq \mathcal{A}(L)$ is $cl(W) =  \{ a \in A(x)  |  a\le \vee_{w\in W} w \} $.

\begin{rmk}
There is a well-known result concerning matroid complexes which has a similar flavor to Theorem \ref{atom-theorem}.  This appears e.g. in  \cite{bjorner90} as Theorem 7.3.4.  The statement of this result is as follows:  a simplicial complex $\Delta $ is the independence complex of a  matroid  if and only if $\Delta $ is pure and every ordering of the vertices induces a shelling of $\Delta $.  
It seems  
interesting to note the resemblance between the necessary hypotheses for Theorem 7.3.4 of
\cite{bjorner90} and those of 
our characterization(s) of geometric lattices.  
It is natural to ask if one result is a translation of the other into a different 
language.  This does not appear to be the case, rather 
the two results seem to be 
fundamentally quite distinct. 
\end{rmk}

\section{Acknowledgments}

\medskip
The authors thank Peter McNamara for pointing out that one does not need to assume that a finite lattice is atomic in order 
to give a lexicographic shellability characterization of geometric lattices.  They thank Axel Hultman for suggesting another consequence of our work regarding the relationship between R-labelings and semimodular lattices.  They thank Vic Reiner for suggesting the question of characterizing semimodular lattices using many of the same  techniques as are applied to geometric lattices.  Finally, the authors thank the anonymous referees for very helpful feedback on earlier versions of the paper.

\medskip



\end{document}